\def\ni{\noindent}
\def\S {\Sigma}
\def\s {\sigma}
\def\cP{\mathcal{P}}
\def\f{f^{\oplus}}
\newtheorem{thm}{Theorem}[section]
\newtheorem{cor}[thm]{Corollary}
\newtheorem{defn}[thm]{Definition}
\newtheorem{prob}{Problem}
\title{\textbf{\sc A Study on Set-Valuations of Signed Graphs}}
\author{P. K. Ashraf}
\affil{\small Department of Mathematics\\ Government Arts and Science College \\ Koduvally, Kozhikkode - 676xxx, Kerala, India.\\ E-mail: ashrafkalanthod@gmail.com}
\author{K. A. Germina}
\affil{\small Department of Mathematics\\ University of Botswana\\ Gaborone, Botswana.\\ E-mail: srgerminaka@gmail.com}
\author{N. K. Sudev\footnote{Corresponding author}}
\affil{\small Department of Mathematics\\ Vidya Academy of Science \& Technology \\ Thalakkottukara, Thrissur - 680501, Kerala, India.\\ E-mail: sudevnk@gmail.com}
\date{}
\begin{document}
\maketitle

\begin{abstract}
Let $X$ be a non-empty ground set and $\cP(X)$ be its power set. A set-labeling (or a set-valuation) of a graph $G$ is an injective set-valued function $f:V(G)\to \cP(X)$ such that the induced function $\f:E(G) \to \cP(X)$ is defined by $\f(uv)=f(u)\oplus f(v)$, where $f(u)\oplus f(v)$ is the symmetric difference of the sets $f(u)$ and $f(v)$. A graph which admits a set-labeling is known to be a  set-labeled graph. A set-labeling $f$ of a graph $G$ is said to be a set-indexer of $G$ if the associated function $\f$ is also injective.  In this paper, we define the notion of set-valuations of signed graphs and discuss certain properties of signed graphs which admits certain types of set-valuations.
\end{abstract}

\vspace{0.2cm}

\ni \textbf{Key words}: Signed graphs; balanced signed graphs; clustering of signed graphs; set-labeled signed graphs.

\vspace{0.04in}

\ni \textbf{AMS Subject Classification} : 05C78, 05C22. 

\section{Introduction}

For all  terms and definitions, not defined specifically in this paper, we refer to \cite{BM1,FH,DBW} and  and for the topics in signed graphs we refer to \cite{TZ1,TZ2}. Unless mentioned otherwise, all graphs considered here are simple, finite, undirected and have no isolated vertices.

\subsection{An Overview of Set-Valued Graphs}

Let $X$ be a non-empty set and $\cP(X)$ be its power set. A {\em set-labeling} (or a \textit{set-valuation}) of a graph $G$ is an injective function $f:V(G)\to \cP(X)$ such that the induced function $\f:E(G)\to \cP(X)$ is defined by $\f(uv)=f(u)\oplus f(v)~ \forall ~ uv\in E(G)$, where $\oplus$ is the symmetric difference of two sets.  A graph $G$ which admits a set-labeling is called an {\em set-labeled graph} (or a set-valued graph)(see \cite{BDA1}).  

A {\em set-indexer} of a graph $G$ is an injective function $f:V(G)\to \cP(X)$ such that the induced function $\f:E(G) \to \cP(X)$ is also injective. A graph $G$ which admits a set-indexer is called a {\em set-indexed graph} (see \cite{BDA1}).

Several types of set-valuations of graphs have been introduced in later studies and their properties and structural characteristics of such set-valued graphs have been done extensively.

\subsection{Preliminaries on Signed Graphs}

An edge of a graph $G$ having only one end vertex is known as a \textit{half edge} of $G$ and an edge of $G$ without end vertices is called \textit{loose edge} of $G$. 

A \textit{signed graph} (see \cite{TZ1,TZ2}), denoted by $\S(G,\s)$,  is a graph $G(V,E)$ together with a function $\s:E(G)\to \{+,-\}$ that assigns a sign, either $+$ or $-$, to each ordinary edge in $G$. The function $\s$ is called the {\em signature} or {\em sign function} of $\S$, which is defined on all edges except half edges and is required to be positive on free loops.

An edge $e$ of a signed graph $\S$ is said to be a \textit{positive edge} if $\s(e)=+$ and an edge $\s(e)$ of a signed graph $\S$ is said to be a \textit{negative edge} if $\s(e)=-$. The set $E^+$ denotes the set of all positive edges in $\S$ and the set $E^-$ denotes the set of negative edges in $\S$. 

A  simple  cycle (or path) of a signed graph $\S$  is said to be {\em balanced} (see \cite{AACE,FHS}) if the product of signs of its edges is $+$. A  signed  graph $\S$ is said to be a {\em balanced signed graph} if it contains no half edges and all of its simple cycles are balanced.  It is to be noted that the number of all negative  signed graph is balanced if and only if it is bipartite. 

Balance or imbalance is the basic and the most important property of a signed graph. The following theorem, popularly known as {\em Harary's Balance Theorem}, establishes a criteria for balance in a signed graph.

\begin{thm}
	{\rm \cite{FHS}} The following statements about a signed graph are equivalent.
	\begin{enumerate}\itemsep0mm
	\item[(i)] A signed graph $\S$ is balanced.
	\item[(ii)] $\S$ has no half edges and there is a partition $(V_1,V_2)$ of $V(\S)$ such that $E^-=E(V_1,V_2)$.
	\item[(iii)] $\S$ has no half edges and any two paths with the same end points have the same sign. 
\end{enumerate}
\end{thm}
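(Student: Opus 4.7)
The plan is to establish the cyclic chain of implications \textbf{(i)} $\Rightarrow$ \textbf{(ii)} $\Rightarrow$ \textbf{(iii)} $\Rightarrow$ \textbf{(i)}, which is the standard route for equivalences of this kind. Throughout, the ``no half edges'' clause follows automatically from the definition of balance, so the real content is the interplay between the cycle condition, the bipartition of the negative edges, and path signs.

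For \textbf{(i)} $\Rightarrow$ \textbf{(ii)}, I would handle each connected component of $\S$ separately, fix a spanning tree $T$ and a root vertex $v_0$ in that component, and for every vertex $v$ define $\pi(v) \in \{0,1\}$ to be the parity of the number of negative edges on the unique $v_0$--$v$ path in $T$. Setting $V_1 := \pi^{-1}(0)$ and $V_2 := \pi^{-1}(1)$ gives a partition; the task is to check that $E^- = E(V_1,V_2)$. For a tree edge this is automatic from the definition of $\pi$. For a non-tree edge $uv$, the fundamental cycle through $uv$ with respect to $T$ is balanced by hypothesis, so the sign of $uv$ equals the product of the signs along the $u$--$v$ tree-path; translating this into parities shows that $uv$ is negative precisely when $\pi(u) \neq \pi(v)$.

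For \textbf{(ii)} $\Rightarrow$ \textbf{(iii)}, I would observe that the sign of any path $P$ is $(-1)^{k(P)}$, where $k(P)$ is the number of negative edges of $P$; under the partition hypothesis these are exactly the edges of $P$ that cross between $V_1$ and $V_2$. A straightforward parity argument shows $k(P) \bmod 2$ depends only on which parts the two endpoints of $P$ lie in, not on $P$ itself, giving \textbf{(iii)}. For \textbf{(iii)} $\Rightarrow$ \textbf{(i)}, given any simple cycle $C$ pick two distinct vertices $u,v$ on $C$; then $C$ decomposes into two internally disjoint $u$--$v$ paths $P_1,P_2$. By \textbf{(iii)} they carry the same sign, and the sign of $C$ is the product of these two signs, hence $+$.

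The most delicate step is \textbf{(i)} $\Rightarrow$ \textbf{(ii)}: one must verify that the label $\pi(v)$ is well defined from the balance assumption, and that the resulting bipartition captures \emph{every} negative edge, not merely the tree ones. The key technical lemma in the background is that any closed walk in $\S$ decomposes into simple cycles (up to parity of negative-edge count), so balance of all simple cycles forces balance of all closed walks, which in turn forces the consistency of $\pi$. Once this is in hand, the remaining two implications are essentially parity bookkeeping.
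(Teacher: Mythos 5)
The paper does not prove this theorem at all: it is quoted as Harary's Balance Theorem with a citation to \cite{FHS}, so there is no in-paper argument to compare yours against. Your cyclic chain of implications is the standard and correct proof. The spanning-tree potential $\pi$ in \textbf{(i)} $\Rightarrow$ \textbf{(ii)} is well defined by construction (tree paths are unique), so the only thing to verify is that non-tree edges respect the partition, which your fundamental-cycle argument does; the closed-walk lemma you mention as ``background'' is therefore not actually needed, though it is true. Two small points worth making explicit if you write this up: in \textbf{(i)} $\Rightarrow$ \textbf{(ii)} the parity of negative edges on the $u$--$v$ tree path equals $\pi(u)+\pi(v) \bmod 2$ because the common initial segment of the two root paths is traversed twice and cancels; and in \textbf{(iii)} $\Rightarrow$ \textbf{(i)} the decomposition of a simple cycle into two internally disjoint $u$--$v$ paths requires the cycle to have at least two vertices, which holds since the graphs here are simple. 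With those noted, the argument is complete.
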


Some balancing properties of certain types of signed graphs have been studied in \cite{GSH, GSH2}.

A signed graph $\S$ is said to be  \textit{clusterable} or \textit{partitionable} (see \cite{TZ1,TZ2}) if its vertex set can be partitioned into subsets, called \textit{clusters}, so that every positive edge joins the vertices within the same cluster and every negative edge joins the vertices in the different clusters. If $V(\S)$ can be partitioned in to $k$ subsets with the above mentioned conditions, then the signed graph $\S$ is said to be \textit{$k$-clusterable}. In this paper, we study the $2$-clusterability of signed graphs only.

Note that $2$-clusterability always implies balance in a signed graph $\S$. But, the converse need not be true. If all edges in $\S$ are positive edges, then $\S$ is balanced but not $2$-clusterable.

In this paper, we introduce the notion of set-valuations of signed graphs and study the properties and characteristics of such signed graphs.

\section{Set-Labeled Signed Graphs}

Motivated from the studies on set-valuations of signed digraphs in \cite{BDAS}, and the studies on integer additive set-labeled signed graphs in \cite{GS5}, we define the notion of a set-labeling of a signed graph as follows.

\begin{defn}{\rm
	Let $X$ be a non-empty set and let $\S$ be a signed graph, with corresponding underlying graph $G$ and the signature $\s$. An injective function $f:V(\S)\to \cP(X)$ is said to be a \textit{set-labeling} (or \textit{set-valuation}) of $\S$ if $f$ is a set-labeling of the underlying graph $G$ and the signature of $\S$ is defined by $\s(uv)=(-1)^{|f(u)\oplus f(v)|}$. A signed graph $\S$ together with a set-labeling $f$ is known as a \textit{set-labeled signed graph} (or set valued signed graph) and is denoted by $\S_f$. }
\end{defn}

\begin{defn}{\rm 
A set-labeling $f$ of a signed graph $\S$ is said to be a set-indexer of $\S$ if $f$ is a set-indexer of the underlying graph $G$.}
\end{defn}

If the context is clear, we can represent a set-valued signed graph or a set-indexed signed graph simply by $\S$ itself. In this section, we discuss the $2$-clusterability and balance of set-valued signed graphs.

The following theorem establishes the existence of set-valuations for all signed graphs.

\begin{thm}
	Every signed graph admits a set-labeling (and a set-indexer).
\end{thm}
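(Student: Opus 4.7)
The plan is to exhibit, for a signed graph $\Sigma=(G,\sigma)$ on $n$ vertices, an injective $f:V(\Sigma)\to\cP(X)$ for a suitable ground set $X$ satisfying both conditions of Definition~2.1: $f$ must be a set-labeling of the underlying graph $G$ (which is automatic once $|X|$ is large enough) and must induce the correct signature, $\sigma(uv)=(-1)^{|f(u)\oplus f(v)|}$ on every edge $uv$. For the set-indexer half of the claim I would additionally need $\f$ to be injective on $E(\Sigma)$.

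The key tool is the elementary identity $|f(u)\oplus f(v)|=|f(u)|+|f(v)|-2|f(u)\cap f(v)|$, which yields $|f(u)\oplus f(v)|\equiv |f(u)|+|f(v)|\pmod{2}$. The sign $(-1)^{|f(u)\oplus f(v)|}$ therefore depends only on the parities of the two label sizes, and matching $\sigma$ reduces to partitioning $V(\Sigma)$ into classes $V_1$ (vertices destined for even-sized labels) and $V_2$ (odd-sized labels) such that $E^-=E(V_1,V_2)$. By Theorem~1.1 such a bipartition is available once $\Sigma$ is balanced. With $V_1,V_2$ in hand I would take $X=\{x_1,\dots,x_m\}$ with $m\ge \lceil \log_2 n\rceil + 1$, then assign each vertex of $V_1$ a distinct even-cardinality subset and each vertex of $V_2$ a distinct odd-cardinality subset of $X$; since each collection has $2^{m-1}$ elements, the supply is ample and injectivity of $f$ is clear. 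For the set-indexer strengthening I would enlarge $X$ further and, viewing $\cP(X)$ as the vector space $\mathbb{F}_2^m$, select the labels in sufficiently general position that the $|E(\Sigma)|$ symmetric differences $f(u)\oplus f(v)$ are pairwise distinct; a greedy construction with $m$ of order $2\log_2 n$ handles this routinely.

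The main obstacle I anticipate is conceptual, and it sits precisely in the parity reduction above: the argument forces the negative edges to form an edge-cut between the even- and odd-labeled classes, so a set-labeling in the sense of Definition~2.1 can exist only when $\Sigma$ is balanced. To keep the theorem as stated one must read Definition~2.1 so that $f$ \emph{defines} the signature via the displayed formula rather than being required to match a pre-given one, in which case the result is immediate from injectivity of $f$; otherwise balance of $\Sigma$ should be added as a hypothesis and the construction above gives the proof. I would make this interpretive choice explicit at the start of the proof and then carry out the construction described.
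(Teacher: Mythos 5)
Your proposal is correct, but it takes a genuinely different route from the paper, and in doing so it puts its finger on a real ambiguity in the statement. The paper's own proof is a one-line construction: take $X=\{1,2,\ldots,n\}$ and $f(v_i)=\{i\}$, so that $\f(v_iv_j)=\{i,j\}$ is automatically injective. This works only under the reading you identify in your final paragraph, namely that $f$ \emph{defines} the signature via $\s(uv)=(-1)^{|f(u)\oplus f(v)|}$; indeed, the paper's labeling gives $|\f(e)|=2$ for every edge, so the resulting signed graph is all-positive and any originally prescribed negative edges are simply overwritten. You instead treat $\s$ as pre-given, reduce the sign condition to the parity of $|f(u)|+|f(v)|$ via $|f(u)\oplus f(v)|\equiv|f(u)|+|f(v)|\pmod 2$, and invoke Harary's Balance Theorem to place even-cardinality labels on one side of the cut $E^-=E(V_1,V_2)$ and odd-cardinality labels on the other. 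That buys more: it shows exactly which signed graphs admit a signature-respecting set-labeling (the balanced ones), which is precisely the content of the paper's later Theorems \ref{Thm-2.4} and \ref{Thm-2.5}, and it exposes the fact that Theorem 2.3 as stated cannot coexist with Theorem \ref{Thm-2.5} unless the "induced signature" reading is adopted. Two small remarks: your greedy bound of $m$ on the order of $2\log_2 n$ for the set-indexer is slightly optimistic (avoiding all collisions $f(v)\oplus f(u)=f(w)\oplus f(x)$ can require on the order of $3\log_2 n$ bits, though the constant is immaterial), and a cleaner explicit set-indexer in the balanced case is to assign pairwise \emph{disjoint} sets of the appropriate parity, e.g.\ a $2$-element set to each vertex of $V_1$ and a singleton to each vertex of $V_2$, since then every edge label is a union that uniquely determines its endpoints.
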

\begin{proof}
	Let $\S$ be a signed graph whose vertex set is given by $V(\S)=\{v_1,v_2,\ldots, v_n\}$. Let $X=\{1,2,3,\ldots, n\}$. Define a set-valued function $f:V(\S)\to \cP(X)$ such that $f(v_i)=\{i\}$, where $1\le i\le n$. Clearly, $f$ is an injective function.  Then, $\f(v_iv_j)=\{i,j\}, \forall\ uv\in E(G)$. Note that $\f$ is also an injective function and hence $f$ is a set-indexer of $\S$.
\end{proof}

We say that two sets are of \textit{same parity} if they are simultaneously even or simultaneously odd. If two sets are not of same parity, then they are said to be the sets of \textit{opposite parity}. The signature of an edge of a set-valued signed graph can be determined in terms of the set-labels of its end vertices, as described in the following theorem.

\begin{thm}\label{Thm-2.1}
	An edge $e$ of a set-labeled signed graph is a positive edge if and only if the set-labels of its end vertices are of the same parity.
\end{thm}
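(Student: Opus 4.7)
The plan is to reduce everything to the definition $\sigma(uv)=(-1)^{|f(u)\oplus f(v)|}$ and to analyze the parity of $|f(u)\oplus f(v)|$ in terms of the parities of $|f(u)|$ and $|f(v)|$. By definition, $e=uv$ is positive exactly when $(-1)^{|f(u)\oplus f(v)|}=+1$, i.e.\ exactly when $|f(u)\oplus f(v)|$ is an even integer. So the theorem will follow once I show that $|f(u)\oplus f(v)|$ is even if and only if $|f(u)|$ and $|f(v)|$ have the same parity.

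The main tool is the standard set-theoretic identity
\[
|A\oplus B| \;=\; |A|+|B|-2\,|A\cap B|,
\]
obtained either by a direct counting argument (every element of $A\cup B$ is counted once in $|A|+|B|$ unless it lies in $A\cap B$, in which case it is counted twice but does not lie in $A\oplus B$) or from the decomposition $A\oplus B=(A\cup B)\setminus(A\cap B)$ together with $|A\cup B|=|A|+|B|-|A\cap B|$. I would apply this identity with $A=f(u)$ and $B=f(v)$.

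Since $2\,|f(u)\cap f(v)|$ is always even, the integer $|f(u)\oplus f(v)|$ has the same parity as $|f(u)|+|f(v)|$. Now $|f(u)|+|f(v)|$ is even precisely when $|f(u)|$ and $|f(v)|$ are both even or both odd, that is, when $f(u)$ and $f(v)$ are of the same parity in the sense defined just before the theorem. Combining this equivalence with the observation in the first paragraph yields the claim in both directions.

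There is no genuine obstacle here; the only thing to be careful about is not to confuse the parity of the symmetric difference (a cardinality) with the parity condition on the vertex set-labels themselves. Once the identity for $|A\oplus B|$ is invoked, the biconditional is a one-line parity argument and both implications are handled simultaneously.
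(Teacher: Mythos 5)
Your proof is correct, and it takes a cleaner route than the paper's. The paper writes $|f(v_i)\oplus f(v_j)|=|f(v_i)-f(v_j)|+|f(v_j)-f(v_i)|$ and then works through four subcases (both differences even or both odd, crossed with $|f(v_i)|$ even or odd), each time using $|f(v_i)-f(v_i)\cap f(v_j)|=|f(v_i)|-|f(v_i)\cap f(v_j)|$ to pass the parity through $|f(v_i)\cap f(v_j)|$ to $|f(v_j)|$; moreover it only explicitly argues the forward direction (positive edge implies same parity). Your single identity $|A\oplus B|=|A|+|B|-2|A\cap B|$ collapses all of this: since the $2|A\cap B|$ term is irrelevant modulo $2$, the parity of $|f(u)\oplus f(v)|$ equals that of $|f(u)|+|f(v)|$, and the biconditional falls out in one step, with both directions handled simultaneously. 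What the paper's case analysis buys is only that it makes the role of the intersection explicit; what your version buys is brevity, symmetry in $u$ and $v$, and a genuinely two-way argument rather than one direction plus an appeal to reversibility.
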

\begin{proof}
	Let $f$ be a set-labeling of a given signed graph $\S$. Assume that, an edge $e=v_iv_j$ be a positive edge in $\S$. Then, $|f(v_i)\oplus f(v_j)|=|f(v_i)-f(v_j)|+|f(v_j)-f(v_i)|$ is an even number. That is, $|f(v_i)-f(v_j)|$ and $|f(v_j)-f(v_i)|$ are simultaneously even or simultaneously odd. Hence, we need to consider the following cases.
	
	{\em Case-1:} Assume that both $|f(v_i)-f(v_j)|$ and $|f(v_j)-f(v_i)|$ are even. That is, both $|f(v_i)-f(v_i)\cap f(v_j)|$ and $|f(v_j)-f(v_i)\cap f(v_j)|$ are even. Then, we have
	
	{\em Subcase-1.1:} Let $|f(v_i)|$ be an even integer. Then, since $|f(v_i)-f(v_i)\cap f(v_j)|=|f(v_i)|-|f(v_i)\cap f(v_j)|$, we have $|f(v_i)\cap f(v_j)|$ must also be even. Hence, as $|f(v_j)-f(v_i)\cap f(v_j)|=|f(v_j)|-|f(v_i)\cap f(v_j)|$ is even, we have $|f(v_j)|$ is even.
	
	{\em Subcase-1.2:} Let $|f(v_i)|$ be an odd integer. Then, since $|f(v_i)-f(v_i)\cap f(v_j)|=|f(v_i)|-|f(v_i)\cap f(v_j)|$ is even, we have $|f(v_i)\cap f(v_j)|$ must be odd. Hence, as $|f(v_j)-f(v_i)\cap f(v_j)|=|f(v_j)|-|f(v_i)\cap f(v_j)|$ is even, we have $|f(v_j)|$ is odd. 
	
	{\em Case-2:} Assume that both $|f(v_i)-f(v_j)|$ and $|f(v_j)-f(v_i)|$ are odd. That is, both $|f(v_i)-f(v_i)\cap f(v_j)|$ and $|f(v_j)-f(v_i)\cap f(v_j)|$ are odd. Then, we have
	
	{\em Subcase-2.1:} Let $|f(v_i)|$ be an even integer. Then, since $|f(v_i)-f(v_i)\cap f(v_j)|$ is odd, we have $|f(v_i)\cap f(v_j)|$ must be odd. Hence, as $|f(v_j)-f(v_i)\cap f(v_j)|$ is odd, we have $|f(v_j)|$ is even.
	
	{\em Subcase-2.2:} Let $|f(v_i)|$ be an odd integer. Then, since $|f(v_i)-f(v_i)\cap f(v_j)|$ is odd, we have $|f(v_i)\cap f(v_j)|$ must be even. Then, as $|f(v_j)-f(v_i)\cap f(v_j)|=|f(v_j)|-|f(v_i)\cap f(v_j)|$ is odd, we have $|f(v_j)|$ is odd.    
\end{proof}

\ni As a contrapositive of Theorem \ref{Thm-2.1}, we can prove the following theorem also.

\begin{thm}\label{Thm-2.2}
	An edge $e$ of a set-labeled signed graph is a negative edge if and only if the set-labels of its end vertices are of the opposite parity.
\end{thm}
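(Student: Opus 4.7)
The plan is to deduce Theorem \ref{Thm-2.2} directly from Theorem \ref{Thm-2.1} as a logical contrapositive, since the authors themselves flag this. The key observation is that in a signed graph every edge carries exactly one sign from $\{+,-\}$, and any two finite sets are either of the same parity or of opposite parity (these two situations being mutually exclusive and exhaustive by definition). Given these two dichotomies, the biconditional ``positive iff same parity'' is logically equivalent to ``negative iff opposite parity'', and no new content needs to be derived.

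More concretely, I would argue as follows. Suppose $e = v_iv_j$ is a negative edge of the set-labeled signed graph $\S_f$. Then $e$ is not positive, so by Theorem \ref{Thm-2.1} the set-labels $f(v_i)$ and $f(v_j)$ fail to be of the same parity; by the dichotomy above they must be of opposite parity. Conversely, if $f(v_i)$ and $f(v_j)$ are of opposite parity then they are not of the same parity, so Theorem \ref{Thm-2.1} implies that $e$ is not a positive edge, forcing $\s(e) = -$.

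As a sanity check, one could also give a one-line direct proof using the identity $|f(v_i) \oplus f(v_j)| = |f(v_i)| + |f(v_j)| - 2\,|f(v_i) \cap f(v_j)|$, whose parity coincides with that of $|f(v_i)| + |f(v_j)|$. Then $\s(e) = (-1)^{|f(v_i) \oplus f(v_j)|} = -$ if and only if $|f(v_i)| + |f(v_j)|$ is odd, which happens precisely when exactly one of $|f(v_i)|$, $|f(v_j)|$ is odd, i.e.\ when the set-labels are of opposite parity. This is a cleaner alternative that avoids repeating the case analysis used in Theorem \ref{Thm-2.1}.

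There is essentially no obstacle here: the only thing to verify is that the ``same parity / opposite parity'' dichotomy genuinely partitions all pairs of finite subsets of $X$, which is immediate from the authors' own definition. The main stylistic choice is whether to present the argument as a pure contrapositive (matching the authors' stated intent) or as the self-contained parity computation above.
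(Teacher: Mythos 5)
Your proposal is correct and takes essentially the same route as the paper, which simply declares Theorem \ref{Thm-2.2} to be the contrapositive of Theorem \ref{Thm-2.1}; you make explicit the two dichotomies (positive/negative, same/opposite parity) that justify this, which the paper leaves implicit. Your one-line direct computation via $|f(v_i)\oplus f(v_j)|=|f(v_i)|+|f(v_j)|-2|f(v_i)\cap f(v_j)|$ is a clean bonus that would in fact also streamline the case analysis in the paper's proof of Theorem \ref{Thm-2.1}.
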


\ni The following result is an immediate consequence of Theorem \ref{Thm-2.1} and \ref{Thm-2.2}.

\begin{cor}
	A set-valued signed graph $\S$ is balanced if and only if every cycle in $\S$ has even number of edges whose end vertices have opposite parity set-labels.
\end{cor}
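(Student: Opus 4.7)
The plan is to reduce the corollary to a direct consequence of Theorem \ref{Thm-2.2} together with the definition of balance. First, I would use Theorem \ref{Thm-2.2} to identify, inside any cycle $C$ of $\S$, the set of negative edges of $C$ with the set of edges of $C$ whose end vertices carry set-labels of opposite parity; these two sets coincide edge-for-edge, so counting one is the same as counting the other.

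Next, I would invoke the definition of a balanced cycle used in Section~1: a simple cycle is balanced if and only if the product of the signs of its edges equals $+$, which in turn holds if and only if the number of negative edges on the cycle is even. Combining this with the identification from the previous step, a cycle $C$ of $\S$ is balanced if and only if $C$ contains an even number of edges whose end vertices have opposite parity set-labels.

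Finally, $\S$ is balanced precisely when every simple cycle of $\S$ is balanced (the half-edge condition in the definition of a balanced signed graph is vacuous, since a set-labeling is assigned only on ordinary edges). Quantifying the cycle-level equivalence above over all simple cycles of $\S$ therefore yields the claimed biconditional.

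I do not foresee any genuine obstacle; the real technical content was already carried out in Theorems \ref{Thm-2.1} and \ref{Thm-2.2}, which translate the sign of an edge into a parity statement about the cardinalities of its end-label sets. The corollary is essentially a bookkeeping step that rephrases the standard definition of balance in the parity language provided by those theorems.
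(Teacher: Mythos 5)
Your proof is correct and follows essentially the same route as the paper: the paper likewise combines Theorem \ref{Thm-2.2} (negative edges are exactly those with opposite-parity end labels) with the fact that a cycle is balanced precisely when it has an even number of negative edges. You have merely spelled out the quantification over cycles and the vacuity of the half-edge condition, which the paper leaves implicit.
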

\begin{proof}
	Note that the number of negative edges in any cycle of a balanced signed graph is even. Hence, the proof is immediate from Theorem \ref{Thm-2.2}.
\end{proof}

The following theorem discusses a necessary and sufficient condition for a set-valued signed graph to be $2$-clusterable.

\begin{thm}\label{Thm-2.3}
	A set-valued signed graph is $2$-clusterable if and only if at least two adjacent vertices in $\S$ have opposite parity set-labels.
\end{thm}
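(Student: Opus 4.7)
The plan is to exploit the correspondence between edge signs and the relative parity of set-label cardinalities supplied by Theorems~\ref{Thm-2.1} and~\ref{Thm-2.2}, and to use the parity of $|f(v)|$ itself as the candidate partition witnessing $2$-clusterability.

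For the sufficiency direction, I would start from the hypothesised adjacent pair $u,v$ whose set-labels have opposite parity and define
$V_1=\{w\in V(\S):|f(w)|\text{ is even}\}$ and $V_2=\{w\in V(\S):|f(w)|\text{ is odd}\}$. The pair $u,v$ places one vertex in each part, so neither part is empty. By Theorem~\ref{Thm-2.1} every positive edge has endpoints of equal parity, hence lies inside $V_1$ or inside $V_2$; by Theorem~\ref{Thm-2.2} every negative edge has endpoints of opposite parity, hence joins $V_1$ to $V_2$. Thus $(V_1,V_2)$ is a valid $2$-clustering.

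For the necessity direction, I would assume $\S$ is $2$-clustered by some $(U_1,U_2)$ with both parts non-empty, locate an edge $e$ having one endpoint in $U_1$ and the other in $U_2$, and observe that the clustering axiom forces $e$ to be negative, so Theorem~\ref{Thm-2.2} gives the required two adjacent vertices of opposite parity.

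The main obstacle is exactly the last step: guaranteeing a crossing edge. If the underlying graph is disconnected and carries only positive edges, one could split whole components between $U_1$ and $U_2$ without any edge crossing, and no opposite-parity adjacent pair would exist. I would handle this either by invoking an implicit connectedness hypothesis on $\S$, or by observing that in the absence of any crossing edge the partition $(U_1,U_2)$ is merely a cosmetic refinement of the trivial $1$-clustering, so the substantive content of $2$-clusterability genuinely demands a negative edge, which by Theorem~\ref{Thm-2.2} is exactly a pair of adjacent vertices with opposite-parity set-labels.
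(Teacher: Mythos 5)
Your proof is correct and follows essentially the same route as the paper: the parity classes of $|f(v)|$ serve as the two clusters for sufficiency, and a cluster-crossing (hence negative) edge yields the opposite-parity adjacent pair for necessity. Your write-up is in fact more explicit than the paper's, which merely asserts that a suitable partition ``can be found'' without naming the parity partition, and which resolves your crossing-edge concern by assuming $\S$ is connected.
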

\begin{proof}
	First, assume that at least two adjacent vertices in the set-valued signed graph $\S$ have opposite parity set-labels. If $e=v_iv_j$ be an edge of $\S$ such that $f(v_i)$ and $f(v_j)$ are of opposite parity, then $\s(v_iv_j)=-$. Then, we can find $(U_1, U_2)$ be a partition of $V(\S)$ such that $U_1$ contains one end vertex of every negative edge and $U_2$ contains the other end vertex of every negative edge. Therefore, $\S$ is $2$-clusterable.
	
	Conversely, assume that $\S$ is $2$-clusterable. Then, there exist two non-empty subsets $U_1$ and $U_2$ of $V(\S)$ such that $U_1\cup U_2=V(\S)$. Since $\S$ is a connected signed graph, at least one vertex in $U_1$ is adjacent some vertices in $U_2$ and vice versa. Let $e=v_iv_j$ be such an edge in $\S$ Since $\S$ is $2$-clusterable, $e$ is a negative edge  and hence $f(v_i)$ and $f(v_j)$ are of opposite parity. This completes the proof. 
\end{proof}

\begin{thm}
	Let $f$ be a set-indexer defined on a signed graph $\S$ whose underlined graph $G$ is an Eulerian graph. If $\S$ is balanced, then $\sum\limits_{e\in E(\S)}|\f(e)|\equiv 0\ ({\rm mod}\ 2)$.
\end{thm}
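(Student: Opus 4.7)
The plan is to turn the sum modulo $2$ into a count of negative edges and then use Harary's balance theorem together with the even-degree property of $G$.

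First I would observe that the set-labeling condition $\s(uv)=(-1)^{|f(u)\oplus f(v)|}$ means $|\f(e)|$ is even exactly when $e$ is a positive edge, and odd exactly when $e$ is a negative edge. Therefore
\[
	\sum_{e\in E(\S)}|\f(e)| \;\equiv\; |E^-| \pmod{2},
\]
so the theorem reduces to showing that the number of negative edges of $\S$ is even whenever $\S$ is balanced and $G$ is Eulerian.

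Next, since $\S$ is balanced, Harary's Balance Theorem supplies a partition $(V_1,V_2)$ of $V(\S)$ with $E^-=E(V_1,V_2)$, i.e.\ the set of negative edges is exactly the edge-cut determined by this bipartition. Counting the $G$-degrees of the vertices of $V_1$ gives
\[
	\sum_{v\in V_1} d_G(v) \;=\; 2\,|E(V_1)| + |E(V_1,V_2)|.
\]
Because $G$ is Eulerian, every $d_G(v)$ is even, so the left-hand side is even, which forces $|E(V_1,V_2)|=|E^-|$ to be even. Combined with the congruence above, this yields $\sum_{e}|\f(e)|\equiv 0\pmod{2}$.

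The argument has essentially no obstacle: the only thing to be careful about is that the bipartition supplied by Harary's theorem may not be balanced in size, but this is irrelevant — all we use is that $E^-$ is the $(V_1,V_2)$-cut, and the parity of any cut-size in an Eulerian graph is forced to be even by the handshake identity. (An alternative route would decompose $E(G)$ into edge-disjoint simple cycles, note that each is balanced and hence contributes an even number of negative edges, then sum; I prefer the cut argument because it avoids invoking a cycle decomposition theorem.)
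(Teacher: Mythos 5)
Your proof is correct, but it takes a genuinely different route from the paper's. The paper decomposes the Eulerian graph $G$ into edge-disjoint simple cycles $C_1,\dots,C_k$, observes that positive edges carry even-cardinality labels and negative edges odd-cardinality labels, and then uses balance of each cycle (an even number of negative edges per cycle) to conclude that each cycle contributes an even subtotal; this is exactly the alternative you sketch in your closing parenthesis. You instead reduce everything to the parity of $|E^-|$, invoke Harary's Balance Theorem to identify $E^-$ with the cut $E(V_1,V_2)$, and force that cut to have even size via the handshake identity $\sum_{v\in V_1} d_G(v)=2|E(V_1)|+|E(V_1,V_2)|$ together with the evenness of all degrees. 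Your argument buys two things: it sidesteps the cycle-decomposition theorem entirely, and it makes transparent that the set-indexer hypothesis is superfluous --- only the set-labeling relation $\s(uv)=(-1)^{|f(u)\oplus f(v)|}$ is used. (The paper's proof invokes injectivity of $\f$ to assert $\f(E_i)\cap \f(E_j)=\emptyset$ across cycles, but that disjointness plays no real role in a parity computation, so the paper too is implicitly proving the stronger statement.) The paper's route, on the other hand, localizes the argument to individual cycles and so generalizes more directly to statements about cycle-wise sums. Both are valid; yours is the more economical.
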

\begin{proof}
	Let the underlying graph $G$ of $\S$ is Eulerian.  Then, $G=\bigcup\limits_{i=1}^k C_i$, where each $C_i$ is a cycle such that $C_i$ and $C_j$ are edge-disjoint for $i\ne j$. Let $E_i$ be the edge set of the cycle $C_i$. Since $f$ is a set-indexer of $\S$, we have $f(E_i)\cap f(E_j) =\emptyset$, for $i\ne j$. Hence, we have
	\begin{equation}
	\sum\limits_{e\in E(\S)}|f(e)|=\sum\limits_{i=1}^k\sum\limits_{e_i\in E_i}f(e_i) \label{eqn-1}
	\end{equation}
		
	Consider the cycle $C_i$. Let $A_i$ be the set all positive edges and $B_i$ be the set of all negative edges in the cycle $C_i$. Then, the set-labels of edges in $A_i$ are of even parity and those of edges in $B_i$ are of odd parity. That is, $|\f(e)|\equiv 0\ ({\rm mod}\ 2)$ for all $e\in A_i$ and hence we have
	\begin{equation}
	\sum\limits_{e\in A_i}|f(e)|\equiv 0\ ({\rm mod}\ 2) \label{eqn-2}
	\end{equation}
	
	Since $\S$ is balanced, the number of negative edges in $C_i$ is even. Therefore, the number of elements in $B_i$ must be even. That is, the number of edges having odd parity set-labels in $C_i$ is even. Therefore, being a sum of even number of odd integers, we have 
		\begin{equation}
		\sum\limits_{e_i\in B_i}|f(e_i)|\equiv 0\ ({\rm mod}\ 2) \label{eqn-3}
		\end{equation}
	
	\ni From Equation \eqref{eqn-2} and Equation \eqref{eqn-3}, we have 
	\begin{equation}
	\sum\limits_{e\in E_i}|f(e)|=\sum\limits_{e_i\in A_i}|f(e_i)|+\sum\limits_{e_i\in B_i}|f(e_i)|\equiv 0\ ({\rm mod}\ 2). \label{eqn-4}
	\end{equation}
	
	\ni Therefore, by Equation \eqref{eqn-1} and Equation \eqref{eqn-4}, we can conclude that $$\sum\limits_{e\in E(\S)}|\f(e)|\equiv 0\ ({\rm mod}\ 2).$$
	\vspace{-1cm}
	
\end{proof}

From the above results, we infer the most important result on a set-valued signed graph as follows. 

\begin{thm}\label{Thm-2.4}
	If a signed graph $\S$ admits a vertex set-labeling, then $\S$ is balanced.
\end{thm}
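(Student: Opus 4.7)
The plan is to deduce balance directly from Theorem \ref{Thm-2.2}, which tells us that an edge $uv$ is negative precisely when $|f(u)|$ and $|f(v)|$ have opposite parities. So the negative edges are exactly the ``parity-switching'' edges with respect to the $\{0,1\}$-valued function $p(v):=|f(v)|\bmod 2$.

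First I would dispose of the half-edge issue: by the very definition of a set-labeled signed graph, $\s$ is specified as $(-1)^{|f(u)\oplus f(v)|}$ on every edge $uv$, which requires both end vertices to exist. Hence $\S$ has no half edges, and to verify balance it suffices (by definition) to check that every simple cycle in $\S$ has positive sign.

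Next, I would fix an arbitrary simple cycle $C\colon v_1v_2\cdots v_k v_1$ in $\S$ and track the parity sequence $p(v_1),p(v_2),\ldots,p(v_k),p(v_1)$. By Theorem \ref{Thm-2.2}, the edge $v_iv_{i+1}$ is negative if and only if $p(v_i)\neq p(v_{i+1})$, i.e.\ it is a ``parity switch'' along $C$. Since the sequence starts and ends at the same value $p(v_1)$, the total number of switches around $C$ must be even. Therefore the number of negative edges in $C$ is even, and consequently
\[
\prod_{e\in E(C)}\s(e)=(-1)^{|E^{-}\cap E(C)|}=+.
\]
As $C$ was arbitrary, every simple cycle in $\S$ is balanced, and so $\S$ itself is balanced.

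I do not anticipate a genuine obstacle here: the entire argument is a one-line parity-counting observation, made possible by the prior characterization of edge signs via vertex-label parities. The only thing to be mildly careful about is not to invoke anything stronger than Theorem \ref{Thm-2.2} and the absence of half edges; everything else follows from the trivial fact that a closed walk in $\{0,1\}$ must contain an even number of transitions.
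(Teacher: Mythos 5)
Your proof is correct, and it is a cleaner and genuinely complete version of what the paper only gestures at. Both arguments rest on the same key fact (Theorem \ref{Thm-2.2}: an edge is negative exactly when its end vertices carry opposite-parity set-labels), but the executions differ. The paper splits into cases according to how the odd-parity vertices sit inside a given cycle --- whether such a vertex is internal, whether two of them are adjacent or not --- and then asserts that ``this condition can be verified in all cases,'' which is not a proof so much as an invitation to one; the general configuration of odd-parity vertices on a cycle is never actually handled. Your observation that the cyclic sequence $p(v_1),\ldots,p(v_k),p(v_1)$ of parities must change value an even number of times disposes of all these cases at once, and your preliminary remark about half edges addresses a hypothesis of balance that the paper silently ignores. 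An equally short alternative, worth noting, is to invoke condition (ii) of Harary's Balance Theorem directly: taking $V_1$ to be the even-parity vertices and $V_2$ the odd-parity ones, Theorem \ref{Thm-2.2} says precisely that $E^-=E(V_1,V_2)$, so $\S$ is balanced. Either way, your argument is the one the paper should contain.
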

\begin{proof}
	Let $\S$ be a signed graph which admits a set-labeling. If all vertices of $\S$ have the same parity set-labels, then by Theorem \ref{Thm-2.1}, all edges of $\S$ are positive edges and hence $\S$ is balanced. 
	
	Next, let that $\S$ contains vertices with opposite parity set-labels. Let $A_i$ be the set of all vertices with odd parity set-labels and $B_i$ be the set of all even parity set-labels. First, assume $v_i$ be a vertex in $A_i$ whose adjacent vertices are in $B_i$. Then, $v_i$ is one end vertex of some negative edges in $\S$. If $v_i$ is not in a cycle of $\S$, then none of these negative edges will be a part in any cycle of $\S$. 
		
	If $v_i$ is an internal vertex of a cycle $C$, then it is adjacent to two vertices, say $v_j$ and $v_k$, which are in $B_i$. Hence, the edges $v_iv_j$ and $v_iv_k$ are negative edges. If two vertices $v_i$ and $v_j$ are adjacent in the cycle $C$, then $v_i$ is adjacent to one more vertex, say $v_k$ and the vertex $v_j$ is also adjacent to one more vertex $v_l$ and in the cycle $C$, the edges $v_iv_k$ and $v_jv_l$ are negative edges and the edge $v_iv_j$ is a positive edge. If the vertices $v_i$ and $v_j$ are not adjacent, then also each of them induce two negative, which may not be distinct always. However, in each case the number of negative edges will be even. This condition can be verified in all cases when any number element of $A_i$ are the vertices of any cycle $C$ in $\S$. Hence, the umber of negative edges in any cycle of a set-labeled signed graph is even. Hence, $\S$ is balanced.
	
	Hence, in this case, the number negative edges in $C$ will always be even. 
\end{proof}

It is interesting to check whether the converse of the above theorem is valid. In context of set-labeling of signed graphs, a necessary and sufficient condition for a signed graph $\S$ is to be balanced is given in the following theorem.

\begin{thm}\label{Thm-2.5}
	A signed graph $\S$ is balanced if and only if it admits a set-labeling.
\end{thm}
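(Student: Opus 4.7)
One direction is already in hand: by Theorem \ref{Thm-2.4}, any signed graph admitting a set-labeling is balanced. The plan is therefore to concentrate on the converse, namely that every balanced signed graph admits a set-labeling $f:V(\S)\to\cP(X)$ with $\s(uv)=(-1)^{|f(u)\oplus f(v)|}$ for every edge $uv$.

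The starting point will be Harary's Balance Theorem. Assuming $\S$ is balanced with no half edges, part (ii) of that theorem furnishes a partition $(V_1,V_2)$ of $V(\S)$ such that $E^-=E(V_1,V_2)$. In other words, an edge is positive exactly when its endpoints lie in the same block of the partition, and negative exactly when they straddle the partition. Combined with Theorem \ref{Thm-2.1} and Theorem \ref{Thm-2.2}, this observation tells me precisely what a set-labeling has to achieve: all vertices in $V_1$ must receive set-labels of one fixed parity and all vertices in $V_2$ must receive set-labels of the opposite parity, with all labels distinct.

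With this guidance, I would construct $f$ explicitly. Let $V_1=\{u_1,\ldots,u_p\}$ and $V_2=\{w_1,\ldots,w_q\}$, take the ground set $X=\{1,2,\ldots,2p+q\}$, and define
\[
f(u_i)=\{2i-1,\,2i\}\quad (1\le i\le p),\qquad f(w_j)=\{2p+j\}\quad (1\le j\le q).
\]
Each $f(u_i)$ has even cardinality, each $f(w_j)$ has odd cardinality, and because the listed sets are pairwise disjoint (or at least pairwise distinct) $f$ is injective. To finish it remains to verify the signature condition case by case: for $u_iu_{i'}\in E$ with $i\ne i'$, $|f(u_i)\oplus f(u_{i'})|=4$ and the induced sign is $+$, matching $\s$ since $u_iu_{i'}$ lies inside $V_1$; similarly $|f(w_j)\oplus f(w_{j'})|=2$ for edges inside $V_2$; and for a crossing edge $u_iw_j$, $|f(u_i)\oplus f(w_j)|=3$ is odd, giving sign $-$, again matching $\s$.

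I do not anticipate a deep obstacle here, since the construction is essentially forced by Theorems \ref{Thm-2.1}--\ref{Thm-2.2} together with Harary's partition. The only point requiring care is to simultaneously enforce (a) injectivity of $f$ and (b) the correct parity for every vertex; both are easily arranged by taking $X$ large enough and using even-sized labels on one block of the partition and odd-sized labels on the other, as above. A concluding remark will observe that the same $f$ is in fact a set-indexer, since the induced edge-sets are pairwise distinct, so the equivalence between balance and the existence of a set-labeling can even be strengthened to the existence of a set-indexer.
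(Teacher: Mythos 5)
Your proposal is correct, and it is substantially more complete than what the paper actually writes: the paper disposes of Theorem \ref{Thm-2.5} in one line, calling it ``an immediate consequence of Theorem \ref{Thm-2.1} and Theorem \ref{Thm-2.4}.'' But those two theorems only directly give the forward direction (a set-labeled signed graph is balanced) together with the parity characterization of edge signs; the converse --- that every balanced signed graph can actually be equipped with an injective set-valued function inducing its given signature --- is precisely the content the paper leaves unargued, and it is exactly what you supply. Your route through Harary's Balance Theorem is the right one: the partition $(V_1,V_2)$ with $E^-=E(V_1,V_2)$ reduces the problem to assigning even-cardinality labels on one block and odd-cardinality labels on the other while keeping $f$ injective, and your explicit disjoint labels $\{2i-1,2i\}$ and $\{2p+j\}$ verifiably give symmetric differences of sizes $4$, $2$, and $3$ in the three cases, matching the required signs. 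Your closing observation that the construction is in fact a set-indexer is also correct (the induced edge labels have distinct cardinalities across the three edge types and are distinct within each type), and strengthens the statement beyond what the paper claims. The one thing worth flagging is a tension internal to the paper rather than to your argument: Theorem 2.3 asserts that \emph{every} signed graph admits a set-labeling, which under the reading you (correctly) adopt --- that the labeling must reproduce the prescribed signature --- would contradict Theorem \ref{Thm-2.5}; your proof makes clear that the honest statement is the one you prove, with the labeling required to induce the given signs.
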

\begin{proof}
	The proof is an immediate consequence of Theorem \ref{Thm-2.1} and Theorem \ref{Thm-2.4}.
\end{proof}

In view of Theorem \ref{Thm-2.1} and Theorem \ref{Thm-2.5}, we have 

\begin{thm}
	Any set-labeled signed graph is balanced.
\end{thm}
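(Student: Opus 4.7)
The statement is essentially a repackaging of Theorem \ref{Thm-2.4} (and the forward implication of Theorem \ref{Thm-2.5}): by definition, a set-labeled signed graph is a signed graph equipped with a set-labeling, and Theorem \ref{Thm-2.4} already asserts that any signed graph admitting a set-labeling is balanced. So my plan is to present the result as a one-line corollary, citing Theorem \ref{Thm-2.4} (or Theorem \ref{Thm-2.5}) directly.

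If a more self-contained argument is preferred, I would replace the case analysis appearing in the proof of Theorem \ref{Thm-2.4} with a cleaner parity-walking argument. The plan would be: first invoke Theorem \ref{Thm-2.2} to identify the negative edges as exactly those edges $uv$ for which $|f(u)|$ and $|f(v)|$ have opposite parity. Next, assign to each vertex $v$ the parity bit $p(v) := |f(v)| \pmod 2$. Then an edge is negative if and only if the bit flips across it. Finally, traverse an arbitrary cycle $v_{i_1} v_{i_2} \ldots v_{i_m} v_{i_1}$ and observe that the parity bit must return to its initial value after the traversal, so the number of flips along the cycle is even; hence every cycle has an even number of negative edges, and $\S$ is balanced.

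Since the crux of the matter, namely the parity characterization of edge signs, was established in Theorems \ref{Thm-2.1} and \ref{Thm-2.2}, there is no real obstacle here; the only modest decision is whether to write the proof as a one-liner invoking Theorem \ref{Thm-2.4} or to give the self-contained parity-walking proof above. I would opt for the short version to avoid redundancy, keeping the parity-walking viewpoint in mind as an alternative justification that also sharpens the corresponding corollary about balance in terms of counting opposite-parity edges in each cycle.
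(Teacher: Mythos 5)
Your primary plan matches the paper exactly: the paper states this theorem with no separate proof, presenting it as an immediate consequence of the earlier results (Theorems \ref{Thm-2.1} and \ref{Thm-2.5}, which in turn rest on Theorem \ref{Thm-2.4}), so the one-line corollary citing Theorem \ref{Thm-2.4} is precisely what is intended. Your alternative parity-walking argument is also correct and is in fact a cleaner justification than the paper's case analysis in Theorem \ref{Thm-2.4}, since the observation that the parity bit $p(v)=|f(v)| \bmod 2$ must return to its starting value around any cycle immediately forces an even number of sign flips, i.e.\ an even number of negative edges.
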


\section{Conclusion}

In this paper, we have discussed the characteristics and properties of the signed graphs which admit set-labeling with a focus on $2$-clusterability and balance of these signed graphs. There are several open problems in this area. Some of the open problems that seem to be promising for further investigations are following.

\begin{prob}{\rm 
Discuss the $k$-clusterability of different types of set-labeled signed graphs for $k>2$.}
\end{prob}

\begin{prob}{\rm 
Discuss the balance, $2$-clusterability and general $k$-clusterability of other types of  set-labeling of signed graphs such as topological set-labeling, topogenic set-labeling, graceful set-labeling, sequential set-labeling etc.}
\end{prob}

\begin{prob}{\rm 
Discuss the balance and $2$-clusterability and general $k$-clusterability of different set-labeling of signed graphs, with different set operations other than the symmetric difference of sets.}
\end{prob}

Further studies on other characteristics of signed graphs corresponding to different set-labeled graphs are also interesting and challenging. All these facts highlight the scope for further studies in this area. 

\section*{Acknowledgement}

The authors would like to dedicate this work to (Late) Prof. (Dr.) Belamannu Devadas Acharya, who had been the motivator and the role model for them and who have introduced the concept of set-valuations of graphs.

\end{document}